\numberwithin{equation}{section}
\newtheorem{theorem}{Theorem}[section]
\newtheorem{definition}{Definition}[section]
\newtheorem{remark}{Remark}[section]
\begin{document}
\title{Multiple weak solutions for a kind of time-dependent equation involving singularity}

\author{F. Abdolrazaghi\thanks{f.abdolrazaghi@edu.ikiu.ac.ir}}

\author{A. Razani\thanks{Corresponding author}\thanks{razani@sci.ikiu.ac.ir}}
\author{R. Mirzaei\thanks{r.mirzaei@sci.ikiu.ac.ir}}
\affil{Imam Khomeini International University\\ Faculty of  Science, Department of Pure Mathematics\\ Postal code: 34149-16818,
  Qazvin, Iran}

\date{\empty}
\maketitle
\begin{abstract}
The existence of at least three weak solutions for a kind of nonlinear time-dependent
equation is studied. In fact, we consider the case that the source function has singularity at origin.
To this aim, the variational methods and the well-known critical points theorem are main tools.
\end{abstract}

\textbf{2010 Mathematics Subject Classification:}{35J20,\ 34B15}

\textbf{Keywords}: Sobolev equation, Weak solution, Critical point theory, Variational method, Singularity.


\section{Introduction}
The linear Sobolev equations have a real physical background (\cite{barenblatt1960basic,shi1990initial,ting1974cooling}) and are studied in \cite{davis1972quasilinear,ewing1977coupled}. Because of their complexity,
 they haven't exact solutions (except some very especial cases \cite{aristov2017exact}). There are different methods to study the solution of these problems. One of the standard methods is the fixed point theory that investigate the existence of solutions of nonlinear boundary
value problems \cite{agarwal2010survey,benchohra2009boundary,zhang2010positive,Mokhtarzadeh12,EhsaniAA,
PournakiAML,GoodarziAAA,Razani2014,dinmohammadi2017analytical,dinmohammadi2017existence}. The calculus of variation is another impressive technique and for using this technique, one needs to show that the given boundary value problem should possess a variational structure on some convenient spaces
\cite{AbdolrazaghiMMN,BehboudiFilmat,chu2017weak,corvellec2010doubly, heydari2018efficient,JeanMawhin,
Khalkhali2013,Khalkhali2012,MakvandFilomat, MahdaviFilomat, MakvandMJM, MakvandCKMS, MakvandTJM,MakvandGMJ,li2005existence,nieto2009variational,rabinowitz1986minimax,ragusa2004cauchy,tang2010some,goudarzi2019weak}.

In the present paper, we study the weak solutions of
\begin{equation}\label{In1}
\left\{
\begin{array}{l}
\frac{\partial u}{\partial t}-\frac{\partial(\triangle u)}{\partial t}=\mu f(x,t,u) \ \  in\  \Omega,\\ 
u=0\ \ on\ \partial\Omega,\\
u(x,0)=g(x)  \ \ x\in\Omega,\\
\end{array}
\right.
\end{equation}
where
$\Omega$ is a non-empty bounded open subset of $\mathbb{R}^N$
with  $\partial \Omega\in C^1$, $\mu$ is a positive parameter,
$f:\Omega\times\mathbb{R}^{+}\times\mathbb{R}\rightarrow\mathbb{R} $ is a Carath\'{e}odory function and
has a singularity at the origin with respect to the time variable and $g:\Omega\rightarrow \mathbb{R}$ vanishes on $\partial \Omega$.\\
The aim of this paper is to find an interval for $\mu$ for which the problem
\eqref{In1} admits at least three distinct weak solutions.

By integrating the first equation of \eqref{In1} we get
\begin{equation}\label{Co1}
\int_{0}^{t}\frac{\partial u(x,s)}{\partial s}ds-\int_{0}^{t}\frac{\partial\Delta u(x,s)}{\partial s}ds=\int_{0}^{t}\mu f(x,s,u)ds,
\end{equation}
or
\begin{equation}\label{Co001}
-\Delta u(x,t)=\mu F(x,t,u)-u(x,t)+g(x)-\Delta g(x),
\end{equation}
where
\begin{equation}\label{Co3}
F(x,t,u)=\int_{0}^{t}f(x,s,u)ds.
\end{equation}
The equation \eqref{Co001} is a time-dependent elliptic equation.

\begin{definition}
A function $u:\Omega\rightarrow \mathbb{R}$
is called a weak solution of
the problem
\eqref{In1}
if
$u\in H_{0}^{1}$
and
\begin{equation}\label{p2}
\begin{array}{rl}
\int_{\Omega}\nabla u(x,t)\cdot\nabla v(x)dx&-\mu\int_{\Omega}F(x,t,u(x))v(x)dx+\int_{\Omega}u(x,t)v(x)dx\\
&-\int_{\Omega}g(x)v(x)dx+\int_{\Omega}\Delta g(x)v(x)dx=0,
\end{array}
\end{equation}
for all
$v\in H_{0}^{1}$ and $t\geq0$.
\end{definition}
\begin{definition}\label{functional}
Define the functionals
$\varphi,\vartheta:H_{0}^{1}\rightarrow \mathbb{R}$
 by $\varphi(u):=\frac{1}{2}{\|u\|}^{2}$
and
\[
\begin{array}{rl}
\vartheta(u):=&\int_{\Omega}\widetilde{F}(x,t,u)dx-\frac{1}{2\mu}\int_{\Omega}\left(u(x,t)
\right)^2dx+\frac{1}{\mu}\int_{\Omega}g(x)u(x,t)dx\\ & \\
& \quad \quad -\frac{1}{\mu}\int_{\Omega}\Delta g(x)u(x,t)dx,
\end{array}
\]
respective, where $\widetilde{F}(x,t,\eta):=\int_{0}^{\eta}F(x,t,s)ds$.
\end{definition}
Notice that $\varphi$ and $\vartheta$ are well-defined and $ C^1$, $\varphi^\prime,\vartheta^\prime\in X^{*}$,
$\varphi^\prime(u)(v)=\int_{\Omega}\nabla u(x)\cdot\nabla v(x)dx$
and
\[
\begin{array}{rl}
\vartheta^\prime(u)(v)=&\int_{\Omega}F(x,t,u(x))v(x)dx-\frac{1}{\mu}
\int_{\Omega}u(x,t)v(x)dx\\
& \\
&\quad \quad +\frac{1}{\mu}\int_{\Omega}g(x)v(x)dx-\frac{1}{\mu}\int_{\Omega}\Delta g(x)v(x)dx.
\end{array}
\]
\begin{remark}
A critical point of  $I_\mu:=\varphi-\mu\vartheta$ is exactly a weak solution of
\eqref{In1}.
\end{remark}
Fix $q\in [1,2^*[$, Embedding Theorem \cite{bonanno2011three} shows
$H_0^1(\Omega)\overset{c}\hookrightarrow L^{q}(\Omega)$, i.e.
there exists $c_q>0$ such that for all $u \in H_0^1(\Omega)$
\begin{equation}\label{p6}
\|u\|_{ L^{q}(\Omega)}\leq c_q\|u\|,
\end{equation}
where
\begin{equation}\label{p7}
c_q\leq\frac{meas(\Omega)^{\frac{2^*-q}{2^*q}}}{\sqrt{N(N-2)\pi}}\left(\frac{N!}{2\Gamma(N/2+1)}\right)^{\frac{1}{N}},
\end{equation}
$\Gamma$ is the Gamma function, $2^*=2N/(N-2)$
and $meas(\Omega)$ denotes the Lebesgue measure of $\Omega$.

\section{Three weak solutions}
In this section the existence of at least three weak solutions for the problem \eqref{In1} is proved. Due to do this, we apply \cite[Theorem 3.6]{bonanno2010structure} which is given below
\begin{theorem}\label{p1}(see \cite{bonanno2010structure}, Theorem 3.6).
let
$X$
be a reflexive real Banach space,
$\Phi:X\rightarrow \mathbb{R}$
be a coercive, continuously Gateaux differentiable and sequentially weakly lower semicontinuous functional whose Gateaux derivative admits a continuous inverse on
$X^{*}$,
$\Psi:X\rightarrow \mathbb{R}$
be a continuously Gateaux differentiable functional whose Gateaux derivative is compact such that
$\Phi(0)=\Psi(0)=0.$
Assume that there exist
$r>0$
and
$\overline{x}\in X$,
with
$r<\Phi(\overline{x})$,
such that:
\begin{enumerate}
\item
$\frac{\sup_{\Phi(x)\leq r}\Psi(x)}{r}<\frac{\Psi(\overline{x})}{\Phi(\overline{x})};$
\item
for each
$\lambda\in\Lambda_{r}:=]\frac{\Phi(\overline{x})}{\Psi(\overline{x})},\frac{r}{\sup_{\Phi(x)\leq r}\Psi(x)}[$
the functional
$\Phi-\lambda\Psi$
is coercive.
\end{enumerate}
Then, for each
$\lambda \in \Lambda_{r}$,
the functional
$\Phi-\lambda\Psi$
has at least three distinct critical points in
$X$.
\end{theorem}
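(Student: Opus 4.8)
The plan is to reconstruct the abstract argument behind this three--critical--points theorem. Fix $\lambda\in\Lambda_r$; note that $\Lambda_r$ is a nonempty interval precisely because hypothesis (1) says $\Phi(\overline x)/\Psi(\overline x)<r/\sup_{\Phi(x)\le r}\Psi(x)$, which in passing forces $\Psi(\overline x)>0$ (since $\sup_{\Phi(x)\le r}\Psi(x)\ge\Psi(0)=0$ and $\Phi(\overline x)>r>0$). Write $I_\lambda:=\Phi-\lambda\Psi$ and $\rho:=\sup_{\Phi(x)\le r}\Psi(x)\ge 0$, so that $\lambda\rho<r$.

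First I would record the soft structural facts. Since $\Psi'$ is compact, $\Psi$ is sequentially weakly continuous; together with the sequential weak lower semicontinuity of $\Phi$ this makes $I_\lambda$ sequentially weakly lower semicontinuous, and by hypothesis (2) it is coercive, hence (by reflexivity) bounded below and attaining a global minimum. Next I would check that $I_\lambda$ satisfies the Palais--Smale condition: if $I_\lambda(x_n)$ is bounded and $I_\lambda'(x_n)\to 0$ in $X^*$, coercivity makes $\{x_n\}$ bounded, so $x_n\rightharpoonup x$ along a subsequence; compactness of $\Psi'$ gives $\Psi'(x_n)\to\Psi'(x)$ strongly, whence $\Phi'(x_n)=I_\lambda'(x_n)+\lambda\Psi'(x_n)\to\lambda\Psi'(x)$ strongly, and since $\Phi'$ admits a continuous inverse on $X^*$, $x_n\to(\Phi')^{-1}(\lambda\Psi'(x))$ strongly.

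The core of the proof is to exhibit two local minima of $I_\lambda$ and then apply the mountain pass theorem. For the first local minimum, minimize $I_\lambda$ over the set $\Phi^{-1}(]-\infty,r])$, which is weakly compact because $\Phi$ is coercive, sequentially weakly lower semicontinuous and $X$ is reflexive; call a minimizer $x_1$. On the level set $\{\Phi=r\}$ one has $I_\lambda(x)=r-\lambda\Psi(x)\ge r-\lambda\rho>0$, while $I_\lambda(x_1)\le I_\lambda(0)=0$, so the minimum is not attained on $\{\Phi=r\}$; hence $\Phi(x_1)<r$, and since $\Phi^{-1}(]-\infty,r[)$ is open, $x_1$ is a local minimum of $I_\lambda$ on $X$, in particular a critical point. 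For the second, evaluate at $\overline x$: as $\lambda>\Phi(\overline x)/\Psi(\overline x)$ and $\Psi(\overline x)>0$, $I_\lambda(\overline x)=\Phi(\overline x)-\lambda\Psi(\overline x)<0$, so the global minimizer $x_2$ has $I_\lambda(x_2)<0$. If $x_2\ne x_1$ we have two distinct local minima. If $x_2=x_1$, then $x_1$ is a global minimizer lying inside $\Phi^{-1}(]-\infty,r[)$, and I would use that every continuous path from $x_1$ to $\overline x$ meets $\{\Phi=r\}$ (because $\Phi(x_1)<r<\Phi(\overline x)$), on which $I_\lambda\ge r-\lambda\rho>0>\max\{I_\lambda(x_1),I_\lambda(\overline x)\}$, to place a second local minimum in the connected component of $\{I_\lambda<0\}$ that contains $\overline x$ --- a component that is bounded by coercivity and disjoint from $x_1$ since $\{I_\lambda<0\}$ avoids $\{\Phi=r\}$. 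With two distinct local minima and the Palais--Smale condition in hand, the mountain pass theorem (in the form that permits local minima as the two ``valleys'') produces a third critical point at a strictly higher level; should the local minima fail to be strict, the corresponding level sets already contain infinitely many critical points. Either way, $I_\lambda$ has at least three distinct critical points.

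The step I expect to be the genuine obstacle is the distinctness bookkeeping at the end --- certifying that the local minimum in $\Phi^{-1}(]-\infty,r[)$, the global minimizer, and the mountain--pass point are three \emph{different} points. This requires a careful analysis of the connected components of the sublevel sets of $I_\lambda$, of which side of the ``wall'' $\{\Phi=r\}$ a given minimizing sequence concentrates on, and of the weak/bounded compactness (weak compactness of $\Phi$-sublevels, boundedness of $I_\lambda$-sublevels from coercivity) that legitimizes each minimization; the rest is the routine verification sketched above.
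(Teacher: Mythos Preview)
The paper does not prove this theorem at all: it is quoted verbatim as \cite[Theorem~3.6]{bonanno2010structure} and then used as a black box in the proof of Theorem~\ref{Maint}. There is therefore no ``paper's own proof'' to compare your attempt against; in the paper the statement is simply a cited tool.

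That said, your reconstruction follows the general philosophy of the Bonanno--Marano argument (produce two local minima separated by a ``mountain'' and conclude via a Pucci--Serrin/mountain--pass mechanism under (PS)). A couple of points in your sketch are looser than they appear. First, concluding that $x_1$ is a local minimum of $I_\lambda$ on $X$ from the fact that it minimizes over $\Phi^{-1}(]-\infty,r])$ and $\Phi(x_1)<r$ uses that $\Phi^{-1}(]-\infty,r[)$ is open, i.e.\ that $\Phi$ is continuous; continuous G\^ateaux differentiability does not by itself guarantee this, and in the original reference this is handled through their structure hypotheses rather than assumed implicitly. Second, the branch where $x_2=x_1$ is the delicate one: your appeal to a second local minimum in a different connected component of $\{I_\lambda<0\}$ is plausible but not justified as written, since weak lower semicontinuity and coercivity give existence of minimizers on weakly closed sets, while connected components of open sublevel sets need not be weakly closed. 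The Bonanno--Marano proof avoids this by a more careful inf--sup construction rather than a direct component argument. Your identification of the distinctness bookkeeping as the genuine obstacle is accurate.
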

Set
\begin{equation}\label{p8}
\begin{array}{ll}
D:=\underset{x\in\Omega}\sup \ dist(x,\partial\Omega),&
\kappa:=\frac{D\sqrt{2}}{2{\pi}^{N/4}}\left(\frac{\Gamma(N/2+1)}{D^N-(D/2)^N}\right)^{\frac{1}{2}},\\
K_1:=\frac{2\sqrt{2}c_1(2^N-1)}{D^2},&
K_2:=\frac{2^{\frac{q+2}{2}}c_q^q(2^N-1)}{qD^2}.
\end{array}
\end{equation}
Now, we can state the main result.

\begin{theorem}\label{Maint}
Let
$f:\Omega\times\mathbb{R}^{+}\times\mathbb{R}\rightarrow\mathbb{R}$
be a Carath\'{e}odory function and $g:\Omega\rightarrow \mathbb{R}$ vanishes on $\partial \Omega$. Assume
\begin{itemize}
\item[(1)] There exist non-negative constants $m_1$,$m_2$ and $q \in ]1,\frac{2N}{N-2}[$ such that
$$F(x,t,s)\leq m_1+m_2\mid s\mid^{q-1}+\frac{1}{\mu}\left(s-g(x)+\Delta g(x)\right)$$
for all $(x,t,s)\in\Omega\times\mathbb{R}^{+}\times\mathbb{R}$.
\item[(2)]
$\widetilde{F}(x,t,\eta)\geq \frac{1}{\mu}\left(\frac{1}{2}\eta^2-\eta g(x)+\eta\Delta g(x)\right)$
for every $(x,t,\eta) \in \Omega\times\mathbb{R}^{+}\times\mathbb{R}$.
\item[(3)]
There exist positive constants $a$ and $b<2$ such that
$$\widetilde{F}(x,t,\eta)\leq a(1+|\eta|^{b})+\frac{1}{\mu}\left(\frac{1}{2}\eta^2-\eta g(x)+\eta\Delta g(x)\right).$$
\item[(4)]
There exist positive constants $\alpha$, $\beta$ with $\beta>\alpha\kappa$ such that
$$\frac{\inf_{x \in \Omega}\left(\widetilde{F}(x,t,\beta)-\frac{1}{\mu}\left(\frac{1}{2}\beta^2-\beta g(x)+\beta\Delta g(x)\right)\right)}{{\beta}^{2}}>m_1\frac{K_1}{\alpha}+m_2K_2{\alpha}^{q-2},$$
where $\kappa,K_1,K_2$ are given by \eqref{p8}.
\end{itemize}
Then the problem \eqref{In1} has at least three weak solutions in $H_{0}^{1}(\Omega)$, for each parameter $\mu$  belonging to
$\Lambda(\alpha,\beta):= \frac{2(2^N-1)}{D^2}\times \left(\delta_1,\delta_2\right)$,
where\\ $\delta_1:= \frac{{\beta}^{2}}{\inf_{x \in \Omega}\left(\widetilde{F}(x,t,\beta)-\frac{1}{\mu}\left(\frac{1}{2}\beta^2-\beta g(x)+\beta\Delta g(x)\right)\right)}$ and $\delta_2:=\frac{1}{m_1\frac{K_1}{\alpha}+m_2K_2\alpha^{q-2}}$.
\end{theorem}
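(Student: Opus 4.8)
The plan is to invoke Theorem \ref{p1} with $X=H_0^1(\Omega)$, $\Phi=\varphi$, $\Psi=\vartheta$ and $\lambda=\mu$; by the Remark after Definition \ref{functional}, every critical point of $I_\mu=\varphi-\mu\vartheta$ is a weak solution of \eqref{In1}, so three critical points give the conclusion. First I would dispose of the structural hypotheses. The functional $\varphi(u)=\tfrac12\|u\|^2$ is coercive, continuously G\^ateaux differentiable and sequentially weakly lower semicontinuous, and $\varphi'$ is the Riesz (duality) map of $H_0^1(\Omega)$, hence admits a continuous inverse on $X^*$. Writing $G(x,t,\eta):=\widetilde F(x,t,\eta)-\tfrac1\mu\bigl(\tfrac12\eta^2-\eta g(x)+\eta\Delta g(x)\bigr)$, so that $\vartheta(u)=\int_\Omega G(x,t,u)\,dx$, integrating the growth bound in (1) over $s$ gives $G(x,t,\eta)\le m_1|\eta|+\tfrac{m_2}{q}|\eta|^q$; combined with the subcritical compact embedding \eqref{p6} (since $q<2^*$), a standard Nemytskii-operator argument shows $\vartheta\in C^1(X)$ with compact derivative. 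Finally $\varphi(0)=0$ and $\vartheta(0)=\int_\Omega\widetilde F(x,t,0)\,dx=0$.

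Next I choose the two competitors. Take $r:=\alpha^{2}$ and, fixing $x_0\in\Omega$ with $B(x_0,D)\subseteq\Omega$ (which the definition of $D$ permits, up to an arbitrarily small shrinking of the radius if the supremum is not attained), let $\overline x\in H_0^1(\Omega)$ be the truncated cone equal to $\beta$ on $B(x_0,D/2)$, equal to $0$ off $B(x_0,D)$, and affine in $|x-x_0|$ on the annulus. A direct computation of $\int_\Omega|\nabla\overline x|^2$ over the annulus yields $\|\overline x\|^2=2\beta^2/\kappa^2$, hence $\varphi(\overline x)=\beta^2/\kappa^2>\alpha^2=r$ precisely because $\beta>\alpha\kappa$ in (4). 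For the sublevel supremum I use $\varphi(x)\le r\iff\|x\|\le\sqrt{2r}$ together with $G(x,t,\eta)\le m_1|\eta|+\tfrac{m_2}{q}|\eta|^q$ and \eqref{p6} at the exponents $1$ and $q$: this gives $\vartheta(x)\le m_1c_1\sqrt{2r}+\tfrac{m_2}{q}c_q^q(2r)^{q/2}$, and after substituting $r=\alpha^2$ and unwinding the constants $K_1,K_2$ of \eqref{p8} one obtains $\dfrac{\sup_{\varphi(x)\le r}\vartheta(x)}{r}\le\dfrac{D^2}{2(2^N-1)}\Bigl(m_1\tfrac{K_1}{\alpha}+m_2K_2\alpha^{q-2}\Bigr)$. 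For the quotient at $\overline x$, hypothesis (2) forces $G\ge0$, so only the inner ball contributes: $\vartheta(\overline x)\ge\mathrm{meas}(B(x_0,D/2))\,\inf_{x\in\Omega}G(x,t,\beta)$, and since $\mathrm{meas}(B(x_0,D/2))/\varphi(\overline x)=D^2/\bigl(2(2^N-1)\beta^2\bigr)$ by \eqref{p8}, this yields $\dfrac{\vartheta(\overline x)}{\varphi(\overline x)}\ge\dfrac{D^2}{2(2^N-1)}\cdot\dfrac{\inf_{x\in\Omega}G(x,t,\beta)}{\beta^2}$.

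Comparing the last two estimates with the strict inequality in (4) gives condition 1 of Theorem \ref{p1}, and moreover the same two estimates show that the endpoints $\tfrac{\varphi(\overline x)}{\vartheta(\overline x)}$ and $\tfrac{r}{\sup_{\varphi(x)\le r}\vartheta(x)}$ of $\Lambda_r$ satisfy $\Lambda(\alpha,\beta)=\tfrac{2(2^N-1)}{D^2}(\delta_1,\delta_2)\subseteq\Lambda_r$. It remains to check condition 2, the coercivity of $\varphi-\lambda\vartheta$ for $\lambda\in\Lambda_r$: hypothesis (3) gives $\varphi(u)-\lambda\vartheta(u)\ge\tfrac12\|u\|^2-\lambda a\,\mathrm{meas}(\Omega)-\lambda a\|u\|_{L^b}^b\ge\tfrac12\|u\|^2-\lambda a\,\mathrm{meas}(\Omega)-\lambda a\,c_b^b\|u\|^b$, which tends to $+\infty$ as $\|u\|\to\infty$ because $b<2$. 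Theorem \ref{p1} then provides, for every $\lambda=\mu\in\Lambda_r$ and a fortiori for every $\mu\in\Lambda(\alpha,\beta)$, at least three distinct critical points of $I_\mu=\varphi-\mu\vartheta$, i.e. three weak solutions of \eqref{In1} in $H_0^1(\Omega)$.

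The purely computational part is the constant bookkeeping in the second paragraph (the identities $\|\overline x\|^2=2\beta^2/\kappa^2$ and $\mathrm{meas}(B(x_0,D/2))/\varphi(\overline x)=D^2/(2(2^N-1)\beta^2)$, and matching the powers of $\alpha$ to the definitions of $K_1,K_2$). The genuine difficulty is the verification that $\vartheta$ is well defined and $C^1$ with compact derivative given that $f$ is only Carath\'eodory and singular in time at the origin: here one uses that $F(x,t,\cdot)=\int_0^t f(x,s,\cdot)\,ds$ absorbs the singularity at $t=0$ and that the exponent $q<2^*$ in (1) places the induced superposition operator inside the compact range of \eqref{p6}. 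A further point requiring care is that (1) controls $\widetilde F$ from above only for $\eta\ge0$, so the sublevel estimate must be arranged through the nonnegative competitor $\overline x$ and the even majorant $m_1|\eta|+\tfrac{m_2}{q}|\eta|^q$.
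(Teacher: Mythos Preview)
Your proposal is correct and follows essentially the same route as the paper: you apply Theorem~\ref{p1} with $X=H_0^1(\Omega)$, $\Phi=\varphi$, $\Psi=\vartheta$, take $r=\alpha^2$, use the same truncated-cone test function $\overline{x}=u_\beta$, derive the identical upper bound on $\chi(r)$ from assumption~(1) and the embedding~\eqref{p6}, the identical lower bound on $\vartheta(\overline{x})/\varphi(\overline{x})$ from assumption~(2), and close with coercivity via assumption~(3). The only cosmetic difference is that in the coercivity step you embed $H_0^1$ directly into $L^b$ via $c_b$, whereas the paper first passes through $L^2$ by H\"older and then uses $c_2$; both give the same conclusion since $b<2$.
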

\begin{proof}
Set $X:=H_{0}^{1}(\Omega)$ and define the functionals $\varphi(u)$ and
$\vartheta(u)$ by Definition \ref{functional}. Clearly, $\vartheta$ and $\varphi$ satisfy the assumptions of \cite[Theorem 3.6]{bonanno2010structure}.
By (1)
\begin{equation}\label{m1}
\widetilde{F}(x,t,\eta)\leq \frac{1}{\mu}\left(\frac{1}{2}\eta^{2}-\eta g(x)+\eta\Delta g(x)\right)+m_1|\eta|+m_2\frac{|\eta|^{q}}{q}
\end{equation}
for every
$(x,t,\eta)\in\Omega\times\mathbb{R}^{+}\times\mathbb{R}$. Thus
\[
\begin{array}{rl}
\vartheta(u):=& \int_{\Omega}\widetilde{F}(x,t,u)dx-\frac{1}{2\mu}\int_{\Omega}\left(u(x,t)\right)^2dx+\frac{1}{\mu}\int_{\Omega}g(x)u(x,t)dx\\& \\
&\quad -\frac{1}{\mu}\int_{\Omega}\Delta g(x)u(x,t)dx\\ & \\
\leq& \frac{1}{\mu}\int_{\Omega}\left(\frac{1}{2}(u(x,t))^{2}-u(x,t) g(x)+u(x,t)\Delta g(x)\right)dx \\ & \\
& \quad +\int_{\Omega}\left(m_1|u(x,t)|+m_2\frac{|u(x,t)|^{q}}{q}\right)dx-\frac{1}{2\mu}\int_{\Omega}\left(u(x,t)\right)^2dx
\\ & \\
&\quad +\frac{1}{\mu}\int_{\Omega}g(x)u(x,t)dx-\frac{1}{\mu}\int_{\Omega}\Delta g(x)u(x,t)dx\\ & \\
\leq& m_1\parallel u\parallel_{L^1(\Omega)}+\frac{m_2}{q}\parallel u\parallel_{L^q(\Omega)}^q.
\end{array}
\]
Let
$r \in ]0,+\infty[$ such that $ \varphi(u)\leq r$. By \eqref{p6},
\[
\vartheta(u)\leq\left(\sqrt{2r}c_1m_1+\frac{2^{\frac{q}{2}}c_q^qm_2}{q}r^{\frac{q}{2}}\right).
\]
Set $\chi(r):=\frac{\sup_{u\in\varphi^{-1}]-\infty,r[}\vartheta(u)}{r}$.
Consequently
\begin{equation}\label{m3}
\chi(r)\leq\left(\sqrt{\frac{2}{r}}c_1m_1+\frac{2^{\frac{q}{2}}c_q^qm_2}{q}r^{\frac{q}{2}-1}\right),
\end{equation}
for every $r>0$.

By \eqref{p8}, there is $x_0\in\Omega$ such that $B(x_0,D)\subseteq\Omega$. Set
\begin{eqnarray}
u_\beta(x,t):=\left\{
  \begin{array}{ll}
  0 \ \ \ \ \  x \in\Omega\backslash B(x_0,D), \\
  \frac{2\beta}{D}(D-|x-x_0|) \ \ x\in\ B(x_0,D)\backslash B(x_0,D/2),\\
\beta  \ \ \ \ \  x\in B(x_0,D/2).
  \end{array}
\right.
\end{eqnarray}
Thus
$u_{\beta}\in H_0^1(\Omega)$.
So
\begin{eqnarray}\label{m5}
\nonumber \varphi(u_\beta)&=&\frac{1}{2}\int_{\Omega}|\nabla u_\beta(x,t) |^2dx \\
\nonumber&=&\frac{1}{2}\int_{B(x_0,D)\backslash B(x_0,D/2)}\frac{(2\beta)^2}{D^2}dx \\
\nonumber&=&\frac{1}{2}\frac{(2\beta)^2}{D^2}(meas(B(x_0,D))-meas(B(x_0,D/2)))\\
&=&\frac{1}{2}\frac{(2\beta)^2}{D^2}\frac{\pi^{N/2}}{\Gamma(N/2+1)}\left(D^N-(D/2)^N\right).
\end{eqnarray}
If we force
$\beta>\alpha\kappa$, by (4), $\alpha^2<\varphi(u_{\beta})$ because $\alpha^2<\frac{\beta^2}{\kappa^2}$. Also by assumption (2),
\begin{equation}\label{m6}
\begin{array}{rl}
\vartheta(u_\beta):=&\int_{\Omega}\widetilde{F}(x,t,u_\beta)dx-
\frac{1}{2\mu}\int_{\Omega}\left(u_\beta(x,t)\right)^2dx+\frac{1}{\mu}\int_{\Omega}g(x)u_\beta(x,t)dx\\ & \\
&\quad \quad -\frac{1}{\mu}\int_{\Omega}\Delta g(x)u_\beta(x,t)dx\\ & \\ =&\int_{\Omega}\left[\widetilde{F}(x,t,u_\beta)-\frac{1}{\mu}\left(\frac{1}{2}u_\beta(x,t)^2-g(x)u_\beta(x,t)+\Delta g(x)u_\beta(x,t)\right)\right]dx\\ & \\
\geq&\int_{B(x_0,D/2)}\left[\widetilde{F}(x,t,u_\beta)-\frac{1}{\mu}\left(\frac{1}{2}u_\beta(x,t)^2-g(x)u_\beta(x,t)+\Delta g(x)u_\beta(x,t)\right)\right]dx\\ & \\
\geq &\inf_{x\in\Omega}\left(\widetilde{F}(x,t,\beta)-\frac{1}{\mu}\left(\frac{1}{2}\beta^2-\beta g(x)+\beta\Delta g(x)\right)\right)\frac{\pi^{N/2}}{\Gamma(N/2+1)}\frac{D^N}{2^N}.
\end{array}
\end{equation}
Next by dividing \eqref{m5} on \eqref{m6}, we have
\begin{equation}\label{m7}
\frac{\vartheta(u_\beta)}{\varphi(u_\beta)}\geq \frac{D^2}{2(2^N-1)}\frac{\inf_{x\in\Omega}\left(\widetilde{F}(x,t,\beta)-\frac{1}{\mu}\left(\frac{1}{2}\beta^2-\beta g(x)+\beta\Delta g(x)\right)\right)}{\beta^2}.
\end{equation}
Using \eqref{m3}, assumption (4) implies
\begin{eqnarray}\label{m8}
\nonumber\chi(\alpha^2)&\leq&(\frac{\sqrt{2}c_1m_1}{\alpha}+\frac{2^{\frac{q}{2}}c_q^q m_2\alpha^{q-2}}{q})\\
\nonumber&=&\frac{D^2}{2(2^N-1)}(m_1\frac{K_1}{\alpha}+m_2K_2{\alpha}^{q-2})\\
\nonumber&<&\frac{D^2}{2(2^N-1)}\frac{\inf_{x\in\Omega}(\widetilde{F}(x,t,\beta)-U(x,t)-G(x)-\overset{\Delta}G(x))}{\beta^2}\\
\nonumber&\leq&\frac{\vartheta(u_\beta)}{\varphi(u_\beta)}.
\end{eqnarray}
Assuming $b<2$ and considering $|u|^b\in L^{\frac{2}{s}}(\Omega)$ for all $u \in X$,
 H\"{o}lder's inequality for $ u \in X$ implies
$\int_{\Omega}|u(x,t)|^b dx\leq\parallel u\parallel_{L^2(\Omega)}^b (meas(\Omega))^{\frac{2-b}{2}}$.
Therefore equation \eqref{p6} shows for all $u \in X$
\[
\int_{\Omega}|u(x,t)|^b dx\leq c_2^b\parallel u\parallel^b (meas(\Omega))^{\frac{2-b}{2}},
\]
and by assumption (3),
\[
\begin{array}{rl}
I_\mu(u)=& \varphi(u)-\mu\vartheta(u)\\ & \\
=& \frac{\parallel u\parallel^2}{2}-\mu\int_{\Omega}\widetilde{F}(x,t,u)dx+
\frac{1}{2}\int_{\Omega}\left(u(x,t)\right)^2dx\\&\\
&\quad -\int_{\Omega}g(x)u(x,t)dx+\int_{\Omega}\Delta g(x)u(x,t)dx\\&\\
\geq & \frac{\parallel u\parallel^2}{2}-\mu\int_{\Omega}a\left(1+|u(x,t)|^{b}\right)dx\\ &\\
\geq &\frac{\parallel u\parallel^2}{2}-\mu a c_2^b (meas(\Omega))^{\frac{2-b}{2}}\parallel u\parallel^b-a\mu meas(\Omega).
\end{array}
\]
This means for every $
\mu \in \Lambda(\alpha,\beta)\subseteq \left] \frac{\vartheta(u_\beta)}{\varphi(u_\beta)},\frac{\alpha^2}{\sup_{\varphi(u)\leq \alpha^2}\vartheta(u)}\right[$,
$I_\mu$ is coercive.
Therefore by Theorem \ref{p1} for each
$\mu \in \Lambda(\alpha,\beta)$
the functional $I_\mu$
has at least three distinct critical points that they are weak solutions of the problem
\eqref{In1}.
\end{proof}
\section{Numerical Experiment}
Now, we present an example.

\begin{equation}\label{N1}
\left\{
\begin{array}{l}
  \frac{\partial u}{\partial t}-\frac{\partial(\Delta u)}{\partial t}=\frac{1}{100}\frac{99}{100t}\left(1+\frac{\exp(-t)}{99}\right)(8+100u+u^2) \ \in \Omega , 
  u\mid_{\partial\Omega}=0,\\ 
  u(x,0)=\frac{1}{1000}\left(\frac{1}{100}-\left(x_1^2+x_2^2+x_3^2\right)\right) \ \ x\in\Omega,
\end{array}
\right.
\end{equation}
where $\Omega :=\left\{(x_1,x_2,x_3)\in \mathbb{R}^3, x_1^2+x_2^2+x_3^2\leq0.1\right\}$, then $\mu=0.01, N=3, D=r=0.1, 2^*=6,$ $g(x)=0.001\left(0.01-\left(x_1^2+x_2^2+x_3^2\right)\right),$ $\Delta g(x)=-0.006$ and
$f(x,t,u)=\frac{99}{100t}\left(1+\frac{\exp(-t)}{99}\right)(8+100u+u^2)$. Now, setting $q=3$, then
\[
\begin{array}{l}
c_1\leq0.00445759,\quad  c_q\leq0.171543,\\
\kappa=1.16798,\quad  K_1\leq8.82557,\quad  K_2\leq6.66307.
\end{array}
\]
Clearly $F(x,t,s)=\frac{99}{100}\left(1+\frac{\exp(-t)}{99}\right)\left(8+100s+s^2\right)$, suppose
$m_1=9$ and $m_2=1$, then the assumption (1) of the Theorem \ref{Maint} is satisfied, i.e.
\[
\begin{array}{l}
\frac{99}{100}\left(1+\frac{\exp(-t)}{99}\right)\left(8+100s+s^2\right)\leq \\
9+ s^{2}+\frac{1}{0.01}\left(s-0.001\left(0.01-\left(x_1^2+x_2^2+x_3^2\right)\right)-0.006\right),
\end{array}
\]
for all $(x,t,s)\in\Omega\times\mathbb{R}^{+}\times\mathbb{R}$.\\ Obviously $\widetilde{F}(x,t,\eta)=\frac{99}{100}\left(1+\frac{\exp(-t)}{99}\right)\left(8\eta+50\eta^2+\frac{\eta^3}{3}\right)$,
then it can be easily verified that the assumption (2) of the Theorem \ref{Maint} holds, i.e.
for all $(x,t,s)\in\Omega\times\mathbb{R}^{+}\times\mathbb{R}$
\[
\begin{array}{l}
\frac{99}{100}\left(1+\frac{\exp(-t)}{99}\right)
\left(8\eta+50\eta^2+\frac{\eta^3}{3}\right)\geq \\
\frac{1}{0.01}\left(\frac{1}{2}\eta^2- 0.001\eta\left(0.01-\left(x_1^2+x_2^2+x_3^2\right)\right)-0.006\eta\right).
\end{array}
\]
Also, by choosing $a=b=10$, the assumption (3) of the Theorem \ref{Maint} is satisfied, i.e.
for all $(x,t,s)\in\Omega\times\mathbb{R}^{+}\times\mathbb{R}$
\[
\begin{array}{l}
\frac{99}{100}\left(1+\frac{\exp(-t)}{99}\right)\left(8\eta+50\eta^2+\frac{\eta^3}{3}\right)\leq\\
10(1+\eta^{10})+\frac{1}{0.01}\left(\frac{1}{2}\eta^2- 0.001\eta\left(0.01-\left(x_1^2+x_2^2+x_3^2\right)\right)-0.006\eta\right).
\end{array}
\]
More, set $\alpha=1$ and $\beta=500>\alpha\kappa$ hence, for all $t\geq 0$, it is not difficult to see that
\begin{eqnarray}
\nonumber
162.872&=&\frac{\inf_{x \in \Omega}\left\{\left(
\begin{array}{c}
\frac{99}{100}\left(1+\frac{\exp(-t)}{99}\right)\left(8\eta+50\eta^2+\frac{\eta^3}{3}\right)- \\
\frac{1}{0.01}\left(\frac{1}{2}\eta^2- 0.001\eta\left(0.01-\left(x_1^2+x_2^2+x_3^2\right)\right)-0.006\eta\right) \\
\end{array}
\right)
\right\}}{{\beta}^{2}}\\ \nonumber&>&m_1K_1+m_2K_2=86.0932.
\end{eqnarray}
Furthermore, it is observed that
$\mu=0.01\in\left]\frac{1}{162.872}
,\frac{1}{86.0932}\right[$,
therefore the problem \eqref{N1} admits at least three week solutions in according to the Theorem \ref{Maint}.

\textbf{Acknowledgements}\\
The authors are very grateful to anonymous reviewers for carefully reading the paper and for their comments and
suggestions which have improved the paper very much.
\bibliographystyle{abbrv}
\bibliography{myref}
\end{document}